\theoremstyle{plain} 
\newtheorem{theorem}             {Theorem}  [section]
\newaliascnt{lemma}{theorem}
\newtheorem{lemma}      [lemma]{Lemma}
\newaliascnt{corollary}{theorem}
\newtheorem{corollary}      [corollary]{Corollary}
\newaliascnt{conjecture}{theorem}
\newtheorem{conjecture}      [conjecture]{Conjecture}
\theoremstyle{definition}
\newtheorem{definition} [theorem]{Definition}
\theoremstyle{remark}
\newtheorem{remark} [theorem]{Remark}
\begin{document}
\title[Maximum principles for the relativistic heat equation]{Maximum
  principles\\ for the relativistic heat equation}
\author{Evan Miller}
\email{ep.miller@mail.utoronto.ca}
\address{University of Toronto}

\author{Ari Stern}
\email{astern@math.wustl.edu}
\address{Washington University in St.~Louis}

\begin{abstract}
  The classical heat equation is incompatible with relativity, since
  the strong maximum principle allows for disturbances to propagate
  instantaneously. Some authors have proposed limiting the propagation
  speed by adding a linear hyperbolic correction term, but then even a
  weak maximum principle fails to hold. We study a more recently
  introduced relativistic heat equation, which replaces the Laplace
  operator by a quasilinear elliptic operator, and show that strong
  and weak maximum principles hold for stationary and time-varying
  solutions, respectively, as well as for sub- and
  supersolutions. Moreover, by transforming the equation into an
  equivalent form, related to the mean curvature operator, we prove
  even stronger tangency and comparison principles.
\end{abstract}

\maketitle

\section{Introduction}

\subsection{Background}

It is well known that the classical heat equation,
\begin{equation}
  \label{eqn:heatEquation}
  u _t = \Delta u ,
\end{equation}
allows for disturbances to propagate with infinite speed, in violation
of special relativity.  This is a direct consequence of the
\emph{strong maximum principle}, which states that if $u$ attains an
interior maximum at some positive time, then it must be constant for
all previous times. This implies that, if a constant solution is
perturbed locally, then the perturbation is instantly detected at
arbitrarily distant points.

To model heat conduction relativistically, various authors have
suggested replacing the parabolic heat equation by a linear hyperbolic
equation, such as the \emph{telegraph equation},
\begin{equation}
  \label{eqn:telegraphEquation}
  c ^{ - 2 } u _{ t t } + u _t = \Delta u ,
\end{equation} 
where the constant $c > 0 $ denotes the speed of light. Indeed, this
equation obeys the relativistic ``speed limit'' and reduces to
\eqref{eqn:heatEquation} in the limit as $ c \rightarrow \infty
$.
(For example, see~\citet{GuPi1968}.)  However, as a hyperbolic
equation, it differs from \eqref{eqn:heatEquation} in at least two
important respects. First, it lacks the regularity and smoothing
properties of \eqref{eqn:heatEquation}: solutions of
\eqref{eqn:telegraphEquation} need not be smooth and can even contain
``thermal shock waves'' (cf.~\citet{Tzou1989}).  Second, it does not
even satisfy a \emph{weak maximum principle}: for example,
\eqref{eqn:telegraphEquation} allows for two ``heat waves'' traveling
towards one another to combine into a larger wave, in violation of
this principle.

\citet{Brenier2003} introduced an alternative approach to relativistic
heat conduction, based on optimal transportation theory. This results
in the quasilinear parabolic equation
\begin{equation}
  \label{eqn:rhec2}
  u _t = \operatorname{div}  \frac{ u {D}  u }{ \sqrt{ u ^2 + c ^{ - 2 } \lvert
      {D}  u \rvert ^2 } }.
\end{equation} 
which we call the \emph{relativistic heat equation}. If $ u > 0 $, we
again obtain \eqref{eqn:heatEquation} in the limit as
$ c \rightarrow \infty $. (By contrast, if $ u < 0 $, the limit is
instead the ill-posed backward heat equation, $ u _t = - \Delta u $.)
Brenier's starting point was to observe that \eqref{eqn:heatEquation}
can be understood as gradient descent of the Boltzmann entropy
functional $ \int u \log u \,\mathrm{d}x $, where the gradient is with
respect to the Wasserstein metric, corresponding to optimal
transportation with the nonrelativistic kinetic energy cost function
$ k (v) = \frac{1}{2} \lvert v \rvert ^2 $
(cf.~\citet{JoKiOt1998}). Using instead the relativistic cost function
\begin{equation*}
  k (v) =
  \begin{cases}
    \biggl( 1 - \sqrt{ 1 - \frac{ \lvert v \rvert ^2 }{ c ^2 } }
    \biggr) c ^2 ,& \lvert v \rvert \leq c ,\\
    \infty ,& \lvert v \rvert > c ,
  \end{cases}
\end{equation*} 
yields \eqref{eqn:rhec2} as the gradient descent equation for
Boltzmann entropy. For the remainder of this paper, we set $ c = 1 $.

It is natural to ask whether the relativistic heat equation
\eqref{eqn:rhec2} satisfies a weak maximum principle, similar to that
satisfied by \eqref{eqn:heatEquation} but not by
\eqref{eqn:telegraphEquation}. The purpose of the present paper is to
answer this question in the affirmative, and to give some related
results on maximum principles for the relativistic heat equation.

\subsection{Outline of the paper}

First, in \autoref{sec:elliptic}, we consider stationary solutions to
the relativistic heat equation, which we call \emph{relativistically
  harmonic} functions (by analogy with harmonic functions, which are
stationary solutions to the classical heat equation); we also consider
subsolutions and supersolutions, which we call relativistically
\emph{subharmonic} and \emph{superharmonic}, respectively. A crucial
component of our analysis is \autoref{lem:w}, which transforms the
quasilinear elliptic operator in \eqref{eqn:rhec2} into a more
convenient form, related to the mean curvature operator. Using this
transformed formulation, we prove a strong maximum (minimum) principle
for subsolutions (supersolutions), as well as even stronger tangency
and comparison principles. Finally, we prove that relativistically
harmonic functions are real analytic, and use this to give another,
more elementary proof of the strong maximum principle.

Next, in \autoref{sec:parabolic}, we consider time-dependent solutions
of the relativistic heat equation, along with subsolutions and
supersolutions. While finite propagation speed (i.e., relativity)
precludes the possibility of a strong maximum or minimum principle,
much less an even stronger tangency principle, we show that comparison
and weak maximum/minumum principles do hold. Finally, we discuss one
possible direction for future work, in which a stronger
maximum/minimum principle and tangency principle might be shown to
hold on light cones, which would still be consistent with finite
propagation speed.

\section{The elliptic case}
\label{sec:elliptic}

\subsection{Solutions, subsolutions, and supersolutions}

\begin{definition}
  Given an open set $ U \subset \mathbb{R}^n $, a function
  $ u \in C ^2 (U) $ with $ u > 0 $ is \emph{relativistically
    subharmonic} if
  \begin{equation}
    \label{eqn:subharmonic}
    \operatorname{div} \frac{ u \,{D} u }{ \sqrt{ u ^2 + \lvert {D}  u
        \rvert ^2 } } \geq  0 ,
  \end{equation} 
  \emph{relativistically superharmonic} if 
  \begin{equation}
    \label{eqn:superharmonic}
    \operatorname{div} \frac{ u \,{D} u }{ \sqrt{ u ^2 + \lvert {D}  u
        \rvert ^2 } } \leq   0 ,
  \end{equation} 
  and \emph{relativistically harmonic} if
  \begin{equation}
    \label{eqn:harmonic}
    \operatorname{div} \frac{ u \,{D} u }{ \sqrt{ u ^2 + \lvert {D}  u
        \rvert ^2 } } = 0 ,
  \end{equation} 
  i.e., if it satisfies both \eqref{eqn:subharmonic} and
  \eqref{eqn:superharmonic}.
\end{definition}

For the purposes of the subsequent analysis, these expressions are
less than ideal. In addition to the $ u > 0 $ restriction, the
coefficients depend on both $u$ and $ {D} u $, while several results
on quasilinear elliptic operators require dependence on $ {D} u $
alone.  However, by making the substitution $ w = \log u $, we now
show that it is possible to obtain an equivalent formulation that is
valid for all real-valued $w$, and where the coefficients depend only
on $ {D} w $, not on $w$ itself.

\begin{lemma}
  \label{lem:w}
  Given $ u \in C ^2 (U) $ with $ u > 0 $, let $ w = \log u $, and
  define the quasilinear operator
  \begin{equation*}
    Q w = \Delta w - \frac{ {D} ^2 w ( {D} w , {D} w ) }{ 1 + \lvert
      {D} w \rvert ^2 } + \lvert {D} w \rvert ^2 .
  \end{equation*} 
  Then $u$ is relativistically subharmonic if and only if
  \begin{equation}
    \label{eqn:subharmonicw}
    Q w \geq 0 ,
  \end{equation} 
  superharmonic if and only if 
  \begin{equation}
    \label{eqn:superharmonicw}
    Q w \leq 0 ,
  \end{equation} 
  and harmonic if and only if 
  \begin{equation}
    \label{eqn:harmonicw}
    Q w = 0 .
  \end{equation} 
\end{lemma}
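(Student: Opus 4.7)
The plan is a direct computation: substitute $u = e^w$ into the left-hand side of \eqref{eqn:subharmonic}--\eqref{eqn:harmonic}, expand the divergence, and show that the result equals a strictly positive prefactor times $Qw$, which immediately yields the three sign equivalences in a single stroke.

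First I would rewrite the argument of the divergence in terms of $w$. Since $u > 0$, the substitution $w = \log u$ is smooth and invertible, with ${D} u = e^w {D} w$, so
\begin{equation*}
  \frac{u \,{D} u}{\sqrt{u^2 + \lvert {D} u \rvert^2}}
  = \frac{e^{2w} {D} w}{\sqrt{e^{2w}(1 + \lvert {D} w\rvert^2)}}
  = \frac{e^w \,{D} w}{\sqrt{1 + \lvert {D} w \rvert^2}}.
\end{equation*}
Note the happy cancellation: the $u$ in the numerator and the $\sqrt{u^2}$ in the denominator combine so that the resulting vector field depends on $w$ only through ${D} w$ (via the mean curvature--type quotient) and an overall factor of $e^w$.

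Next I would apply the product rule $\operatorname{div}(e^w V) = e^w \operatorname{div} V + e^w {D} w \cdot V$ with $V = {D} w / \sqrt{1 + \lvert {D} w\rvert^2}$. The second term contributes $e^w \lvert {D} w \rvert^2 / \sqrt{1 + \lvert {D} w \rvert^2}$. For the first, setting $\phi = \sqrt{1 + \lvert {D} w \rvert^2}$ and computing componentwise,
\begin{equation*}
  \operatorname{div} \frac{{D} w}{\phi}
  = \frac{\Delta w}{\phi} - \frac{{D}^2 w({D} w, {D} w)}{\phi^3},
\end{equation*}
which is the standard identity for the mean curvature operator on the graph of $w$. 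Combining gives
\begin{equation*}
  \operatorname{div} \frac{u \,{D} u}{\sqrt{u^2 + \lvert {D} u \rvert^2}}
  = \frac{e^w}{\phi}\left[\Delta w - \frac{{D}^2 w({D} w, {D} w)}{1 + \lvert {D} w\rvert^2} + \lvert {D} w \rvert^2\right]
  = \frac{e^w}{\phi}\,Qw.
\end{equation*}

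Since $e^w / \phi > 0$ everywhere, the sign of the divergence coincides with the sign of $Qw$ at each point, which simultaneously establishes the equivalences \eqref{eqn:subharmonicw}, \eqref{eqn:superharmonicw}, and \eqref{eqn:harmonicw}. There is no real obstacle here; the only thing to be careful about is the bookkeeping that produces the $\lvert {D} w\rvert^2$ term from the product rule (which is exactly where the fact that $w$ itself, rather than just ${D} w$, enters the original operator gets absorbed).
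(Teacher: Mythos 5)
Your proposal is correct and follows essentially the same route as the paper: rewrite the flux as $e^w\,{D}w/\sqrt{1+\lvert {D}w\rvert^2}$, compute its divergence, and observe that it equals a positive multiple of $Qw$, so all three sign equivalences follow at once. The only cosmetic difference is that you organize the divergence computation via the product rule (separating the $e^w$ factor from the mean-curvature quotient), whereas the paper states the combined expression directly; both yield the same prefactor $e^w(1+\lvert {D}w\rvert^2)^{-1/2}>0$.
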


\begin{proof}
  Since $ u > 0 $, observe that
  \begin{equation*}
    \frac{ u \,{D} u }{ \sqrt{ u ^2 + \lvert {D} u \rvert ^2 } } =
    \frac{ u \frac{  {D} u }{ u } }{ \sqrt{ 1 + \frac{ \lvert {D} u
          \rvert ^2 }{  u ^2} } } = \frac{ e ^w {D} w }{ \sqrt{ 1 +
        \lvert {D} w \rvert ^2 } } .
  \end{equation*} 
  Taking the divergence of this expression, a short calculation gives
  \begin{equation*}
    \operatorname{div} \frac{ e ^w {D} w }{ \sqrt{ 1 + \lvert {D} w
        \rvert ^2 } } = \frac{ e ^w \Bigl[ \bigl( \Delta w + \lvert
      {D} w \rvert ^2 \bigr) \bigl( 1 + \lvert {D} w \rvert ^2 \bigr)
      - {D} ^2 w ( {D} w , {D} w ) \Bigr] }{ \bigl( 1 + \lvert {D} w
      \rvert ^2 \bigr) ^{ 3/2 } } .
  \end{equation*} 
  Substituting this into each of
  \eqref{eqn:subharmonic}--\eqref{eqn:harmonic}, dividing by
  $ e ^w \bigl( 1 + \lvert {D} w \rvert ^2 \bigr) ^{ - 1/2 } > 0 $,
  and rearranging yields
  \eqref{eqn:subharmonicw}--\eqref{eqn:harmonicw}, respectively.
\end{proof}

\begin{lemma}
  \label{lem:elliptic}
  The quasilinear operator $Q$ is non-uniformly elliptic.
\end{lemma}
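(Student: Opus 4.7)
The plan is to isolate the principal (second-order) part of $Q$ and examine the matrix of its coefficients as a function of $Dw$. Writing
\begin{equation*}
  Qw = \sum_{i,j} a^{ij}(Dw)\,\partial_{ij} w + \lvert Dw \rvert^2,
  \qquad
  a^{ij}(p) = \delta_{ij} - \frac{p_i p_j}{1 + \lvert p \rvert^2},
\end{equation*}
I would reduce the problem to showing that the symmetric matrix $A(p) = \bigl( a^{ij}(p) \bigr)$ is positive definite for every $p \in \mathbb{R}^n$, and that its smallest eigenvalue is not bounded below by a positive constant independent of $p$.

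For positive definiteness, I would contract against an arbitrary $\xi \in \mathbb{R}^n$ and apply Cauchy--Schwarz to get
\begin{equation*}
  \sum_{i,j} a^{ij}(p)\,\xi_i \xi_j
  = \lvert \xi \rvert^2 - \frac{(p \cdot \xi)^2}{1 + \lvert p \rvert^2}
  \geq \lvert \xi \rvert^2 - \frac{\lvert p \rvert^2 \lvert \xi \rvert^2}{1 + \lvert p \rvert^2}
  = \frac{\lvert \xi \rvert^2}{1 + \lvert p \rvert^2}.
\end{equation*}
This shows $A(p)$ is positive definite, so $Q$ is elliptic. The same identity also gives the upper bound $\sum_{i,j} a^{ij}(p)\,\xi_i \xi_j \leq \lvert \xi \rvert^2$, attained when $\xi \perp p$.

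To see that ellipticity is non-uniform, I would observe that the eigenvalue bound $\lvert \xi \rvert^2 / (1 + \lvert p \rvert^2)$ is sharp: taking $\xi = p$ realizes equality. Thus $A(p)$ has eigenvalue $1$ on the hyperplane $p^\perp$ and eigenvalue $(1 + \lvert p \rvert^2)^{-1}$ along $p$, so the ratio of largest to smallest eigenvalue equals $1 + \lvert p \rvert^2$, which is unbounded as $\lvert p \rvert \to \infty$. Equivalently, the smallest eigenvalue tends to $0$, so there is no positive constant $\lambda$ with $A(p) \geq \lambda I$ for all $p$. This is precisely the statement of non-uniform ellipticity.

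There is really no obstacle here; the entire argument is a direct eigenvalue computation for the rank-one perturbation of the identity $I - (1+\lvert p \rvert^2)^{-1} p p^T$. The only mild subtlety is to be explicit that the zeroth-order term $\lvert Dw \rvert^2$ plays no role in the definition of ellipticity, which depends only on the principal symbol.
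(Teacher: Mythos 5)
Your argument is correct and is essentially the paper's own: both isolate the principal part $a^{ij}(p) = \delta^{ij} - p^i p^j/(1+\lvert p\rvert^2)$ and compute the eigenvalues of this rank-one perturbation of the identity, finding $(1+\lvert p\rvert^2)^{-1}$ along $p$ and $1$ on $p^\perp$, so that positivity holds but the smallest eigenvalue is not bounded away from zero. The Cauchy--Schwarz detour is harmless but redundant once you exhibit the eigenvalues directly.
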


\begin{remark}
  This essentially amounts to the non-uniform ellipticity of the
  well-studied mean curvature operator
  $ \mathfrak{M} w = \bigl( 1 + \lvert {D} w \rvert ^2 \bigr) \Delta w
  - {D} ^2 w ( {D} w , {D} w ) $
  (cf.~\citet{GiTr2001}), but the demonstration is sufficiently brief
  that we give it here.
\end{remark}

\begin{proof}
  The principal part of $Q$ can be written as
  $ a ^{ i j } ( {D} w ) {D} _i {D} _j w $, where
  \begin{equation*}
    a ^{ i j } ( p ) = \delta ^{ i j } - \frac{ p ^i p ^j }{ 1 +
      \lvert p \rvert ^2 } ,
  \end{equation*} 
  with $ \delta ^{ i j } $ denoting the Kronecker delta.  The matrix
  $ p ^i p ^j $ is symmetric with rank $1$, so its only nonzero
  eigenvalue (having multiplicity $1$) is its trace,
  $ \lvert p \rvert ^2 $. Diagonalizing, it follows that
  $ a ^{ i j } ( p ) $ has eigenvalues
  \begin{equation*}
    \lambda _1 = \frac{ 1 }{ 1 + \lvert p \rvert ^2 } , \qquad \lambda
    _2 = \cdots = \lambda _n = 1 ,
  \end{equation*} 
  which are positive; however, $ \lambda _1 $ is not bounded uniformly
  away from zero.
\end{proof}

\subsection{Maximum/minimum, tangency, and comparison principles} We
begin by giving the strong maximum and minimum principles for
subsolutions and supersolutions, respectively.

\begin{theorem}
  \label{thm:ellipticmax}
  Suppose $ U \subset \mathbb{R}^n $ is open and connected. 
\begin{enumerate}[label=(\roman*)]
\item If $ w \in C ^2 (U) $ satisfies $ Q w \geq 0 $ and attains an
  interior maximum in $U$, then $w$ is constant.

\item If $ w \in C ^2 (U) $ satisfies $ Q w \leq 0 $ and attains an
  interior minimum in $U$, then $w$ is constant.
\end{enumerate} 
Consequently, if $ u \in C ^2 (U) $ is relativistically
subharmonic (superharmonic), then it satisfies the corresponding
strong maximum (minimum) principle.
\end{theorem}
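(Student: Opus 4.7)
The plan is to reduce both parts to the classical strong maximum principle for a linear second-order elliptic operator with no zeroth-order term, by linearizing $Q$ along a straight-line path. The structural feature that makes this work cleanly is that $Q$ depends only on $Dw$ and $D^2w$, not on $w$ itself, so constants solve $Qw = 0$ and shifting $w$ by a constant does not change $Qw$. In particular, at an interior extremum we can compare $w$ directly against its extremal value regarded as a trivial solution.

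For (i), I would fix an interior maximum point $x_0 \in U$ with $w(x_0) = M$, set $v = w - M$, and observe that $v \leq 0$, $v(x_0) = 0$, and $Qv = Qw \geq 0$. Writing $Qw = F(Dw, D^2 w)$ with $F(p, R) = a^{ij}(p) R_{ij} + \lvert p \rvert^2$, the fundamental theorem of calculus applied to $t \mapsto F(tDv, tD^2v)$ gives
\begin{equation*}
0 \;\leq\; Qv - F(0,0) \;=\; \tilde a^{ij}(x)\, D_{ij} v \;+\; \tilde b^{k}(x)\, D_k v ,
\end{equation*}
where $\tilde a^{ij}(x) = \int_0^1 a^{ij}(tDv(x))\, dt$ and $\tilde b^{k}(x)$ is an analogous average of $\partial_{p_k} F$ along the same path. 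The averaged matrix $\tilde a^{ij}$ is positive definite as a convex combination of positive-definite matrices (by \autoref{lem:elliptic}), and all coefficients are continuous on $U$, hence locally bounded since $w \in C^2$. Thus $v$ satisfies $Lv \geq 0$ for a linear elliptic operator $L$ with no zeroth-order term and attains an interior maximum at $x_0$; the classical Hopf strong maximum principle (e.g., \citet{GiTr2001}) then forces $v$ to be constant on the connected set $U$, whence $w \equiv M$.

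Part (ii) is handled in exactly the same way after replacing $M$ by the interior minimum $m$ and reversing inequalities: one obtains $Lv \leq 0$ for a linear elliptic $L$ with no zeroth-order term and $v \geq 0$ attaining a zero interior minimum, equivalently $L(-v) \geq 0$ with $-v \leq 0$ attaining a zero interior maximum, and the same classical principle applies. The concluding statement for relativistically subharmonic or superharmonic $u$ is then immediate from \autoref{lem:w}, since $w = \log u$ is monotone and therefore preserves interior extrema. The main obstacle I anticipate is really just bookkeeping: checking that the linearized coefficients $\tilde a^{ij}, \tilde b^k$ are locally bounded (which follows from $w \in C^2$ and smoothness of $F$) and that pointwise positive definiteness of $\tilde a^{ij}$ suffices, so that the non-uniform ellipticity inherited from \autoref{lem:elliptic} does not obstruct the application of the classical linear principle on a neighborhood of the extremum.
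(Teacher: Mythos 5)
Your proposal is correct and is essentially the paper's own argument: the paper reduces to Hopf's strong maximum principle for a linear operator with no zeroth-order term by ``freezing'' coefficients (citing Pucci--Serrin for this reduction), and your path-integral linearization of $Q$ between $w$ and the constant $M$ is precisely that standard reduction written out explicitly, with the local uniform ellipticity and local boundedness of the averaged coefficients correctly justified from $w\in C^2(U)$ and \autoref{lem:elliptic}. The passage to $u$ via monotonicity of the exponential likewise matches the paper.
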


\begin{proof}
  Using \autoref{lem:elliptic} and the fact that the elliptic operator
  contains no zeroth-order terms in $w$, the statements for $w$ follow
  immediately by applying Hopf's strong maximum principle. (On the
  applicability of Hopf's principle to nonlinear elliptic inequalities
  by ``freezing'' coefficients, which is lesser-known than the linear
  case, see~\citet{PuSe2007}.) The corresponding statement for $u$
  then follows by the monotonicity of $ w \mapsto e ^w = u $.
\end{proof}

In fact, using the fact that $Q$ is independent of $w$, we can
establish an even stronger ``tangency'' principle, which implies the
preceding strong maximum/minimum principle as a special case.

\begin{theorem}
  \label{thm:elliptictangency}
  Suppose $ w , w ^\prime \in C ^2 (U) $ satisfy
  $ Q w \geq Q w ^\prime $ and $ w \leq w ^\prime $ in $U$.  If
  $ w = w ^\prime $ at some point $ x \in U $, then
  $ w \equiv w ^\prime $ in $U$.  Consequently, if
  $ u , u ^\prime \in C ^2 (U) $ satisfy
  \begin{equation}
    \label{eqn:uinequality}
    \operatorname{div} \frac{ u \,{D} u }{ \sqrt{ u ^2 + \lvert {D}  u
        \rvert ^2 } } \geq \operatorname{div} \frac{ u ^\prime \,{D} u
      ^\prime }{ \sqrt{ ( u ^\prime ) ^2 + \lvert {D}  u ^\prime 
        \rvert ^2 } } 
  \end{equation} 
  and $ u \leq u ^\prime $ in $U$, and if $ u = u ^\prime $ at some
  point $ x \in U $, then $ u \equiv u ^\prime $ in $U$.
\end{theorem}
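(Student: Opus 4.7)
The plan is to reduce the tangency statement to Hopf's strong maximum principle for a linear elliptic operator acting on the difference $v := w - w'$. First, note that the $u$-version follows from the $w$-version via \autoref{lem:w}: writing $w := \log u$ and $w' := \log u'$, inequality~\eqref{eqn:uinequality} is equivalent to $Qw \geq Qw'$, while strict monotonicity of $\log$ transfers $u \leq u'$ and $u(x) = u'(x)$ to $w \leq w'$ and $w(x) = w'(x)$, respectively. So it suffices to prove the statement for $w$ and $w'$.

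The key step is to linearize the difference $Qw - Qw'$ as a linear operator applied to $v$. Writing $Qw = F({D} w, {D}^2 w)$ with
\begin{equation*}
  F(p, r) = \operatorname{tr}(r) - \frac{r(p, p)}{1 + \lvert p \rvert^2} + \lvert p \rvert^2,
\end{equation*}
and setting $w_t := w' + tv$ for $t \in [0, 1]$, the fundamental theorem of calculus applied to $t \mapsto F({D} w_t, {D}^2 w_t)$ gives
\begin{equation*}
  Q w - Q w' = A^{ij}(x) \, {D}_i {D}_j v + B^i(x) \, {D}_i v =: L v,
\end{equation*}
where $A^{ij}$ and $B^i$ are the averages over $t \in [0, 1]$ of $\partial F / \partial r_{ij}$ and $\partial F / \partial p_i$, evaluated at $({D} w_t(x), {D}^2 w_t(x))$. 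Since $F$ depends only on $p$ and $r$ and not on $w$ itself, $L$ has no zeroth-order term; moreover, $A^{ij}(x) = \int_0^1 a^{ij}({D} w_t(x)) \, dt$, where $a^{ij}$ is the matrix from \autoref{lem:elliptic}, so $A^{ij}(x)$ is positive definite.

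Since $Qw \geq Qw'$, we have $Lv \geq 0$ in $U$; combined with $v \leq 0$ and $v(x) = 0$, the function $v$ attains an interior maximum at $x$. The main obstacle is the non-uniform ellipticity from \autoref{lem:elliptic}, which blocks a single global application of Hopf's principle. I would handle this in the standard way: on any ball $B \subset U$, the $C^2$ regularity of $w$ and $w'$ bounds $\lvert {D} w_t \rvert$, so $A^{ij}$ is uniformly elliptic with bounded coefficients on $B$. Hopf's strong maximum principle then yields $v \equiv 0$ on any ball around a point of $\{v = 0\}$, so this set is both open and closed in $U$. By connectedness, $v \equiv 0$ throughout $U$, that is, $w \equiv w'$ in $U$, completing the proof.
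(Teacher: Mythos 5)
Your proof of the $w$-statement is correct, and it is in substance the same argument the paper invokes: the paper simply cites the tangency principle of Pucci and Serrin \citep[Theorem 2.1.3]{PuSe2007}, whose proof is exactly your linearization of $Qw-Qw'$ along the segment $w'+tv$ followed by Hopf's strong maximum principle applied locally (where the non-uniform ellipticity is harmless because $Dw_t$ is bounded on compact subsets) and an open--closed argument. The only cosmetic point is that you should take closed balls compactly contained in $U$ to get the uniform bounds on $|Dw_t|$; an arbitrary ball $B\subset U$ need not have $Dw$ bounded on it.

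There is, however, a genuine gap in your reduction of the $u$-statement, namely the claim that \eqref{eqn:uinequality} is \emph{equivalent} to $Qw\geq Qw'$. The computation behind \autoref{lem:w} gives
\begin{equation*}
  \operatorname{div}\frac{u\,Du}{\sqrt{u^2+|Du|^2}}
  = \frac{e^{w}}{\sqrt{1+|Dw|^2}}\,Qw ,
\end{equation*}
so the change of variables multiplies $Q$ by a \emph{solution-dependent} positive factor. This is why the lemma only asserts equivalence of the signs (comparison with $0$): the inequality $\mu\,Qw\geq\mu'\,Qw'$ with $\mu\neq\mu'$ does not imply $Qw\geq Qw'$ (e.g.\ $\mu=10$, $Qw=1$, $\mu'=1$, $Qw'=2$). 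To be fair, the paper's own one-line justification (``follows by the monotonicity of the exponential function'') makes the same silent jump, and the monotonicity argument it alludes to only transfers the hypotheses $u\le u'$ and $u(x)=u'(x)$, not the differential inequality. The clean repair is to run your linearization directly on the original operator $E(u)=\operatorname{div}\bigl(u\,Du/\sqrt{u^2+|Du|^2}\bigr)$ in the variable $v=u-u'$: this produces an additional zeroth-order coefficient $c(x)$ of uncontrolled sign, but since the interior maximum of $v$ is exactly $0$, one writes $c=c^{+}-c^{-}$ and absorbs $-c^{+}v\ge 0$ into the right-hand side, reducing to Hopf's principle with nonpositive zeroth-order coefficient. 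As written, though, the second half of your argument does not go through.
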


\begin{proof}
  Since $Q$ is elliptic, and since its coefficients are independent of
  $w$ and continuously differentiable (in fact, analytic) in
  $ {D} w $, the result is obtained by applying the tangency principle
  for nonlinear elliptic operators in \citet[Theorem
  2.1.3]{PuSe2007}. Finally, as in the proof of
  \autoref{thm:ellipticmax}, the corresponding statement for $u$ and
  $ u ^\prime $ follows by the monotonicity of the exponential
  function.
\end{proof}

\begin{remark}
  Note that \autoref{thm:ellipticmax} is a special case of
  \autoref{thm:elliptictangency}, where we take either
  $ w ^\prime \equiv M $, the maximum attained by $w$, or
  $ w \equiv m ^\prime $, the minimum attained by $ w ^\prime $.
\end{remark}

Finally, we establish a comparison principle, which relates the values
of $w$ and $ w ^\prime $ on the boundary with those in the
interior. Taking either $w$ or $ w ^\prime $ to be constant, we get
the weak maximum principle as a special case.

\begin{theorem}
  Suppose $ w, w ^\prime \in C ^2 (U) \cap C ( \overline{ U } ) $
  satisfy $ Q w \geq Q w ^\prime $ in $U$. If $ w \leq w ^\prime $ on
  $ \partial U $, then $ w \leq w ^\prime $ in $U$. Consequently, if
  $ u , u ^\prime \in C ^2 (U) \cap C ( \overline{ U } ) $ satisfy the
  inequality \eqref{eqn:uinequality} in $U$ and $ u \leq u ^\prime $
  on $ \partial U $, then $ u \leq u ^\prime $ in $U$.
\end{theorem}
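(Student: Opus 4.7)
The plan is to reduce to the tangency principle (\autoref{thm:elliptictangency}) by exploiting the fact that $Q$ is translation-invariant in its argument: since $ Q w $ depends only on $ {D} w $ and $ {D} ^2 w $, we have $ Q ( w + c ) = Q w $ for every constant $c$. This allows me to convert the boundary-versus-interior comparison into an interior tangency statement, which I already have in hand.

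I would argue by contradiction. Suppose $ w ( y ) > w ^\prime ( y ) $ at some $ y \in U $, and let $ v = w - w ^\prime \in C ( \overline{ U } ) $ and $ M = \sup _U v $. Then $ M > 0 $, while $ v \leq 0 $ on $ \partial U $ by hypothesis. Assuming $U$ is bounded, $ \overline{ U } $ is compact, so $v$ attains the value $M$ at some point $ x _0 \in \overline{ U } $; since $ v \leq 0 < M $ on $ \partial U $, necessarily $ x _0 \in U $. Now set $ w ^{ \prime \prime } = w ^\prime + M $. Translation invariance gives $ Q w ^{ \prime \prime } = Q w ^\prime \leq Q w $ in $U$, while $ w \leq w ^{ \prime \prime } $ in $U$ by the definition of $M$, with equality at the interior point $ x _0 $. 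Applying \autoref{thm:elliptictangency} to the pair $ w , w ^{ \prime \prime } $ then forces $ w \equiv w ^{ \prime \prime } $ in $U$, i.e., $ w - w ^\prime $ is identically equal to $M$ in $U$; extending to $ \partial U $ by continuity contradicts $ w \leq w ^\prime $ on $ \partial U $, since $ M > 0 $.

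The statement for $ u , u ^\prime $ then follows from \autoref{lem:w}, which converts \eqref{eqn:uinequality} into $ Q ( \log u ) \geq Q ( \log u ^\prime ) $, together with the monotonicity of $\log$ and $\exp$: setting $ w = \log u $ and $ w ^\prime = \log u ^\prime $, the boundary inequality $ u \leq u ^\prime $ on $ \partial U $ becomes $ w \leq w ^\prime $ on $ \partial U $, so the comparison principle just proved yields $ w \leq w ^\prime $ in $U$, and exponentiating gives $ u \leq u ^\prime $. The one subtlety I anticipate is the implicit compactness requirement: the argument relies on $v$ attaining its supremum on $ \overline{ U } $, which effectively requires $U$ to be bounded (or to satisfy some decay hypothesis on $ w - w ^\prime $ at infinity); without this, the contradiction with the boundary inequality cannot be closed.
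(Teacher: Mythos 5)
Your proof is correct, and it takes a genuinely different route from the paper. The paper disposes of this theorem by direct citation: it invokes the nonlinear elliptic comparison principle of Pucci--Serrin (Theorem 2.1.4) or its quasilinear counterpart in Gilbarg--Trudinger (Theorem 10.1), again using that the coefficients of $Q$ depend only on $Dw$. You instead \emph{derive} the comparison principle from the tangency principle already established in \autoref{thm:elliptictangency}, via the classical shifting trick: since $Q$ has no zeroth-order dependence on $w$, you may replace $w^\prime$ by $w^\prime + M$ with $M = \sup_U (w - w^\prime)$ without changing $Qw^\prime$, converting the boundary comparison into an interior tangency at a point where the supremum is attained, and then reading off a contradiction on $\partial U$. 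This makes the logical structure of the section tighter --- the comparison principle becomes a corollary of the tangency principle rather than a second appeal to external machinery --- at the cost of an explicit boundedness (compactness) hypothesis on $U$, which you correctly flag; note that this hypothesis is in any case implicit in the paper's citations, since Gilbarg--Trudinger Theorem 10.1 is stated for bounded domains. Two minor points: the contradiction also implicitly uses that $U$ is connected (as in \autoref{thm:ellipticmax}), since otherwise the tangency principle only forces $w \equiv w^\prime + M$ on the component of $U$ containing $x_0$ --- though even then the argument closes, because the boundary of that component lies in $\partial U$; and your reduction of the $u$-statement via $w = \log u$ and \autoref{lem:w} matches the paper's.
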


\begin{proof}
  Again, since $Q$ is elliptic, and since its coefficients are
  independent of $w$ and continuously differentiable (in fact,
  analytic) in $ {D} w $, we may apply the nonlinear elliptic
  comparison principle in \citet[Theorem 2.1.4]{PuSe2007} or its
  quasilinear counterpart in \citet[Theorem 10.1]{GiTr2001}. As above,
  the corresponding statement for $u$ and $ u ^\prime $ follows by the
  monotonicity of the exponential function.
\end{proof}

\subsection{Analyticity and an elementary proof of the strong maximum
  principle for relativistically harmonic functions}

Finally, we show that relativistically harmonic functions are
analytic, and we use this to give an elementary, self-contained proof
of the strong maximum principle for solutions (but not sub- or
supersolutions) using analyticity rather than the machinery of the
Hopf principle.

\begin{theorem}
  \label{thm:analytic}
  Every solution $ w \in C ^2 (U) $ of $ Q w = 0 $ is real
  analytic. Consequently, every relativistically harmonic function $u$
  is real analytic.
\end{theorem}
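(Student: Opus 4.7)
The plan is to bootstrap the $C^2$ regularity of $w$ up to $C^\infty$ and then invoke a classical theorem that smooth solutions of elliptic quasilinear PDEs with real-analytic coefficients are themselves real analytic. The equation $Qw=0$ takes the form $F(Dw,D^2w)=0$, where
\[ F(p,M) = \operatorname{tr} M - \frac{ M(p,p) }{ 1 + \lvert p \rvert ^2 } + \lvert p \rvert ^2 \]
is real analytic in $(p,M)$ with no explicit dependence on $x$ or $w$. By \autoref{lem:elliptic} the principal part $\partial F/\partial M$ is positive definite at every $p$, so the equation is elliptic.

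First I would run a standard regularity bootstrap. Freezing coefficients, $w$ satisfies the linear equation $a^{ij}(Dw(x))\,D_iD_jw = -\lvert Dw\rvert^2$, whose coefficients and right-hand side lie in $C^1(U)$ because $w\in C^2(U)$. On any precompact subdomain, continuity of $Dw$ yields uniform ellipticity, so interior Schauder estimates promote $w$ to $C^{2,\alpha}_{\mathrm{loc}}(U)$ for every $\alpha\in(0,1)$. The coefficients thereby improve to $C^{1,\alpha}$; iterating this argument, using smoothness of $F$ in its arguments, yields $w\in C^\infty(U)$.

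Next I would invoke Morrey's classical theorem (cf.\ his \emph{Multiple Integrals in the Calculus of Variations}, Ch.~6) on the analyticity of smooth solutions to elliptic quasilinear PDEs with analytic structure. Both hypotheses — ellipticity from \autoref{lem:elliptic}, and joint real analyticity of the coefficient $a^{ij}(p)=\delta^{ij}-p^ip^j/(1+\lvert p\rvert^2)$ together with the lower-order term $\lvert p\rvert^2$ — have been verified directly. Therefore $w$ is real analytic on $U$, and the analyticity of $u=e^w$ follows because it is a composition of real-analytic functions.

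The main obstacle is less mathematical than bibliographic: classical analyticity theorems (Petrowsky for linear equations, Morrey and Hopf for quasilinear and fully nonlinear equations) are stated in slightly different guises, and one must select a formulation whose hypotheses match the quasilinear structure of $Q$ exactly, including the absence of $x$- and $w$-dependence. Once such a version is in hand, verification is immediate from the explicit formula for $F$, so the proof reduces to the bootstrap followed by a citation.
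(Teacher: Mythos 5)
Your proposal is correct and follows essentially the same route as the paper: both deduce analyticity of $w$ from the ellipticity established in \autoref{lem:elliptic} together with the analytic dependence of the coefficients on $Dw$, citing the classical Hopf--Morrey analyticity theory, and then obtain analyticity of $u = e^w$ by composition. The only difference is that you spell out the Schauder bootstrap from $C^2$ to $C^\infty$ explicitly, which the paper leaves implicit in its citation of the classical results.
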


\begin{proof}
  Analyticity of $w$ follows from \autoref{lem:elliptic} by classical
  elliptic theory (e.g.,~\citet{Hopf1932,Morrey1958a}), since the
  coefficients depend analytically on $ {D} w $.  Analyticity of $u$
  then follows immediately from \autoref{lem:w}.
\end{proof}

We now give an alternative proof that, if a relativistically harmonic
function attains an interior maximum, then it must be constant. (The
proof of the minimum principle is essentially identical, modulo the
direction of the corresponding inequalities.)

\begin{proof}
  Suppose $u$ attains an interior maximum $ u (x) = M $ at some
  $ x \in U $.  Since $u$ is real analytic by \autoref{thm:analytic},
  the maximum at $x$ must be isolated unless $u$ is constant. In the
  latter case, we're done. Otherwise, the maximum is isolated, so
  there exists an open ball $ B ( x, r ) \subset U $ of radius
  $ r > 0 $ such that $ {D} u (y) \cdot ( y - x ) \leq 0 $ for all
  $ y \in B ( x, r ) $.  That is, $ {D} u \cdot \nu \leq 0 $ on
  $ \partial B ( x, s ) $ for all $ 0 < s < r $, where
  $ \nu (y) = \frac{ 1 }{ s } (y - x) $ is the outer unit normal at
  $ y \in \partial B ( x, s ) $. Now, since
  \begin{equation*}
    \operatorname{div}  \frac{ u \,{D}  u }{ \sqrt{ u ^2 + \lvert {D}  u
        \rvert ^2 } } = 0 ,
  \end{equation*} 
  integrating over $ B ( x, s ) $ and applying the divergence theorem
  implies
  \begin{equation*}
    \int _{ \partial B ( x, s ) } \frac{ u }{ \sqrt{ u ^2 + \lvert
        {D}  u \rvert ^2 } } {D}  u \cdot \nu \,\mathrm{d} S = 0 .
  \end{equation*} 
  However, since
  $ \frac{ u }{ \sqrt{ u ^2 + \lvert {D} u \rvert ^2 } } > 0 $ and
  $ {D} u \cdot \nu \leq 0 $ on $ \partial B ( x, s ) $, it follows
  that $ {D}  u \cdot \nu = 0 $ on $ \partial B ( x, s ) $. Since
  this holds for all $ 0 < s < r $, the function $u$ is constant along
  radii of $ B ( x, r ) $, and hence constant on $ B ( x, r )
  $. Explicitly, if $ y \in B ( x, r ) $ and $ \nu = \nu (y) $, then
  \begin{equation*}
    u (y) - u (x) = \int _0 ^{ \lvert y - x \rvert } 
    \frac{\mathrm{d}}{\mathrm{d}s} u ( x + \nu s )
    \,\mathrm{d}s = \int _0 ^{ \lvert y - x \rvert } {D}  u ( x +
    \nu s ) \cdot \nu \,\mathrm{d}s = 0 ,
  \end{equation*} 
  so $ u \equiv M $ on $ B ( x, r ) $. Hence, the nonempty and
  relatively closed set $ u ^{-1} \bigl( \{ M \} \bigr) \cap U $ is
  also open, so the result follows by the assumption that $U$ is
  connected.
\end{proof}

\section{The parabolic case}
\label{sec:parabolic}

\subsection{Solutions, subsolutions, and supersolutions}

We now turn our attention to time-dependent solutions of the
relativistic heat equation, along with subsolutions and
supersolutions. Throughout this section, we denote the spacetime
domain by $ ( x, t ) \in U _T \times ( 0, T ] $, where
$ U \subset \mathbb{R}^n $ is an open set and $ T > 0 $, and the
parabolic boundary of $ U _T $ by
$ \Gamma _T = \overline{ U _T } - U _T $. We say that
$ u \in C ^2 _1 ( U _T ) $ if $u(x,t)$ is $ C ^2 $ in $x$ and $ C ^1 $
in $t$ for all $ (x,t) \in U _T $. (This is consistent with the
notation found, e.g., in \citet{Evans2010}.)

\begin{definition}
  Given $ u \in C ^2 _1 (U _T ) $ with $ u > 0 $, we say that $u$ is a
  \emph{subsolution} of the relativistic heat equation if
  \begin{equation*}
    u _t - \operatorname{div} \frac{ u \,{D} u }{ \sqrt{ u ^2 + \lvert
        {D} u \rvert ^2 } } \leq 0,
  \end{equation*} 
  a \emph{supersolution} if
  \begin{equation*}
    u _t - \operatorname{div} \frac{ u \,{D} u }{ \sqrt{ u ^2 + \lvert
        {D} u \rvert ^2 } } \geq  0,
  \end{equation*} 
  and a \emph{solution} if
  \begin{equation*}
    u _t - \operatorname{div} \frac{ u \,{D} u }{ \sqrt{ u ^2 + \lvert
        {D} u \rvert ^2 } } = 0,
  \end{equation*} 
  i.e., if $u$ is both a subsolution and supersolution.
\end{definition}

As in the previous section, we use the change of variables
$ w = \log u $ to transform the problems above into a more convenient
form, where the elliptic coefficients depend only on $ {D} w $ rather
than on $w$ itself.

\begin{lemma}
  Given $ u \in C ^2 _1 ( U _T ) $ with $ u > 0 $, let $ w = \log u $,
  and define the quasilinear operator
  \begin{equation*}
    \widetilde{ Q } w = \frac{ Q w }{ \sqrt{ 1 + \lvert {D} w \rvert
        ^2 } } .
  \end{equation*}
  Then $u$ is a subsolution of the relativistic heat equation if and
  only if
  \begin{equation*}
    w _t - \widetilde{ Q } w \leq 0 ,
  \end{equation*} 
  a supersolution if and only if
  \begin{equation*}
    w _t - \widetilde{ Q } w \geq 0 ,
  \end{equation*} 
  and a solution if and only if
  \begin{equation*}
    w _t - \widetilde{ Q } w = 0 .
  \end{equation*} 
\end{lemma}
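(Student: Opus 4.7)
The plan is to reduce this directly to \autoref{lem:w} together with a one-line chain-rule computation in $t$. The proof of \autoref{lem:w} already contains the spatial identity
$$\operatorname{div}\frac{u\,Du}{\sqrt{u^2+|Du|^2}} = \frac{e^w}{\sqrt{1+|Dw|^2}}\,Qw,$$
obtained by carrying out the divergence of $e^w Dw/\sqrt{1+|Dw|^2}$; in the elliptic setting this prefactor was simply cleared, but here I would record the identity in this form since it will need to be retained. Next, since $u = e^w$, differentiating in $t$ gives $u_t = e^w w_t$. Substituting both expressions into the subsolution inequality
$$u_t - \operatorname{div}\frac{u\,Du}{\sqrt{u^2+|Du|^2}} \leq 0$$
and dividing through by the strictly positive factor $e^w$ yields precisely $w_t - Qw/\sqrt{1+|Dw|^2} \leq 0$, i.e.\ $w_t - \widetilde{Q}w \leq 0$. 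The supersolution and solution cases follow from the same computation with the inequality reversed or replaced by an equality; every step is reversible, and both $e^w$ and $\sqrt{1+|Dw|^2}$ are strictly positive, so no signs change.

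The only real point requiring attention is where to place the factor $1/\sqrt{1+|Dw|^2}$. In \autoref{lem:w} it was harmless and could be divided out on both sides, but in the parabolic setting the $u_t$ term carries no matching factor, so it cannot simply be discarded. Packaging it into the definition $\widetilde{Q}w = Qw/\sqrt{1+|Dw|^2}$ is exactly the bookkeeping choice that leaves $w_t$ with coefficient $1$ on the left-hand side after dividing by $e^w$, and this is what makes all three equivalences fall out together. Beyond this, no new regularity, ellipticity, or maximum-principle input is required.
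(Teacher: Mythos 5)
Your argument is correct and is essentially identical to the paper's proof, which likewise combines $u_t = e^w w_t$ with the divergence identity computed in the proof of \autoref{lem:w} and divides by $e^w > 0$. Your explicit remark about why the factor $\sqrt{1+\lvert Dw\rvert^2}$ must be absorbed into $\widetilde{Q}$ rather than cleared is a correct reading of why the parabolic case differs from the elliptic one.
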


\begin{proof}
  Since $ w = \log u $, we have
  \begin{equation*}
    u _t = u \frac{ u _t }{ u } = e ^w w _t ,
  \end{equation*} 
  so the result follows by the calculation given in the proof of
  \autoref{lem:w}.
\end{proof}

\subsection{Comparison and weak maximum/minimum principles}

The relativistic heat equation, as its name suggests, does not permit
instantaneous propagation of disturbances (cf.~\citet{AnCaMaMo2006}),
and hence we cannot expect a strong maximum principle or tangency
principle to hold in the time-dependent case. However, we now show
that a comparison principle does still hold, which implies a weak
maximum/minimum principle as an immediate corollary.

\begin{theorem}
  \label{thm:paraboliccomparison}
  Suppose $ w , w ^\prime \in C _1 ^2 ( U _T ) \cap C ( \overline{ U
    _T } ) $ satisfy
  \begin{equation*}
    w _t - \widetilde{ Q } w  \leq w
    ^\prime _t - \widetilde{ Q } w ^\prime 
  \end{equation*} 
  in $ U _T $. If $ w \leq w ^\prime $ on $ \Gamma _T $, then
  $ w \leq w ^\prime $ in $ U _T $. Consequently, if $ u , u ^\prime
  \in C _1 ^2 ( U _T ) \cap C ( \overline{ U _T } ) $ satisfy
  \begin{equation*}
    u _t - \operatorname{div} \frac{ u \,{D} u }{ \sqrt{ u ^2 + \lvert
        {D} u \rvert ^2 } } \leq  u ^\prime _t - \operatorname{div}
    \frac{ u ^\prime \,{D} u ^\prime }{ \sqrt{ (u ^\prime ) 2 + \lvert
        {D} u ^\prime \rvert ^2 } }
  \end{equation*} 
  in $ U _T $, and if $ u \leq u ^\prime $ on $ \Gamma _T $, then
  $ u \leq u ^\prime $ in $ U _T $.
\end{theorem}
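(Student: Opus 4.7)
The plan is to prove the statement for $w$ and $w'$ directly by a weak-maximum-principle argument that exploits the crucial feature of $\widetilde{Q}$, established via \autoref{lem:w}: its coefficients depend only on $Dw$, not on $w$ itself. The statement for $u, u'$ then follows immediately by the monotonicity of $w \mapsto e^w$, exactly as in the proofs in \autoref{sec:elliptic}.

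Set $v = w - w'$. By hypothesis, $v_t \leq \widetilde{Q}w - \widetilde{Q}w'$ in $U_T$, and $v \leq 0$ on $\Gamma_T$. For each $\epsilon > 0$, consider the auxiliary function $v_\epsilon(x,t) = v(x,t) - \epsilon t$; it again satisfies $v_\epsilon \leq 0$ on $\Gamma_T$. It suffices to show $v_\epsilon \leq 0$ throughout $\overline{U_T}$ and then let $\epsilon \to 0$. Suppose instead, for contradiction, that $v_\epsilon$ attains a positive maximum at some $(x_0, t_0) \in U_T$. The standard interior conditions yield $Dv(x_0,t_0) = 0$, $D^2 v(x_0,t_0) \leq 0$ as a symmetric matrix, and $v_t(x_0,t_0) \geq \epsilon$; the purpose of the $-\epsilon t$ perturbation is precisely to enforce this strict sign on $v_t$, including in the case $t_0 = T$ where one only has one-sided information in time.

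The key observation is that at $(x_0,t_0)$ we have $Dw = Dw'$, so the normalizing prefactor $1/\sqrt{1+|Dw|^2}$, the lower-order term $|Dw|^2$ appearing in $Qw$, and the principal-part coefficient matrix $a^{ij}(Dw)$ from \autoref{lem:elliptic} are identical for $w$ and $w'$. Expanding $\widetilde{Q}w - \widetilde{Q}w'$ at $(x_0,t_0)$, the zeroth- and first-order pieces cancel and one is left with
\begin{equation*}
  \widetilde{Q}w - \widetilde{Q}w' = \frac{1}{\sqrt{1 + \lvert Dw \rvert^2}}\, a^{ij}(Dw)\, D_i D_j v ,
\end{equation*}
which is $\leq 0$ since $a^{ij}(Dw)$ is positive definite by \autoref{lem:elliptic} and $D^2 v \leq 0$. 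Combined with the hypothesis $v_t \leq \widetilde{Q}w - \widetilde{Q}w'$, this forces $v_t(x_0,t_0) \leq 0$, contradicting $v_t(x_0,t_0) \geq \epsilon > 0$.

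The only real obstacle is the usual one of ruling out a maximum on the top face $t_0 = T$, which is handled by the $-\epsilon t$ trick rather than by truncation in $T$. The non-uniform ellipticity of $Q$ (and hence $\widetilde{Q}$) does not enter the argument, because pointwise positive definiteness of $a^{ij}(Dw)$ together with $D^2 v \leq 0$ is all that is needed to conclude the sign; no uniform lower bound on the ellipticity constants is required. Alternatively, one could linearize the difference $\widetilde{Q}w - \widetilde{Q}w'$ along the segment from $w'$ to $w$ and invoke a standard quasilinear parabolic comparison principle (e.g., in the style of \citet[Ch.~10]{GiTr2001}), but the direct argument above is self-contained and particularly clean thanks to the $w$-independence of the coefficients.
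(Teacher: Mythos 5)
Your argument is correct, but it reaches the conclusion by a more elementary route than the paper. The paper's proof decomposes $\widetilde{Q}w - \widetilde{Q}w'$ into a linear parabolic operator acting on $z = w - w'$: the principal part gets the frozen coefficients $\widetilde{a}^{ij}(Dw)$, and the remainders $\bigl[\widetilde{a}^{ij}(Dw) - \widetilde{a}^{ij}(Dw')\bigr]D_iD_jw' + \widetilde{b}(Dw) - \widetilde{b}(Dw')$ are absorbed into first-order coefficients $b^i(x,t)D_iz$ (possible precisely because the coefficients depend only on the gradient); it then cites the linear parabolic weak maximum principle from \citet[Section 7.1, Theorem 8]{Evans2010}. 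You instead avoid any linearization and run the classical $-\epsilon t$ perturbation argument directly: at an interior maximum of $v_\epsilon$ one has $Dw = Dw'$, so the prefactor $1/\sqrt{1+\lvert Dw\rvert^2}$, the term $\lvert Dw\rvert^2$, and the matrix $a^{ij}(Dw)$ all coincide for $w$ and $w'$, leaving only $a^{ij}(Dw)D_iD_jv$, which is nonpositive because $a^{ij}$ is positive definite and $D^2v \le 0$; this contradicts $v_t \ge \epsilon$. Your version is self-contained and only uses pointwise information at the maximum, so it sidesteps the (unaddressed) question in the paper's route of whether the linearized coefficients $b^i$ are continuous and bounded up to $\overline{U_T}$ as the cited linear theorem requires -- you are right that no uniform ellipticity is needed, only pointwise positive semidefiniteness. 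The paper's version is shorter on the page and exhibits the linearized operator explicitly, which is convenient if one later wants to import further results from linear theory. Both proofs hinge on the same structural fact, namely that the coefficients of $\widetilde{Q}$ depend only on $Dw$ and not on $w$, and both implicitly assume $U$ is bounded so that the maximum of $v_\epsilon$ over $\overline{U_T}$ is attained.
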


\begin{proof}
  The proof is essentially a parabolic adaptation of the quasilinear
  elliptic comparison principle (\citet[Theorem 10.1]{GiTr2001}).

  First, we rearrange the inequality to obtain
  \begin{equation*}
    ( w - w ^\prime ) _t - ( \widetilde{ Q } w - \widetilde{ Q } w
    ^\prime ) \leq 0 .
  \end{equation*}
  Next,
  \begin{multline*}
    \widetilde{ Q } w - \widetilde{ Q } w ^\prime = \widetilde{a} ^{ i
      j } ( {D} w ) {D} _i {D} _j ( w - w ^\prime ) \\
    + \bigl[ \widetilde{ a } ^{ i j } ( {D} w ) - \widetilde{ a } ^{ i
      j } ( {D} w ^\prime ) \bigr] {D} _i {D} _j w ^\prime +
    \widetilde{ b } ( {D} w ) - \widetilde{ b } ( {D} w ^\prime ) .
  \end{multline*} 
  Letting $ z = w - w ^\prime $, we then define the linear 
  operator
  $ L z = a ^{ i j } (x,t) {D} _i {D} _j z + b ^i (x,t) {D} _i z $ so that
  \begin{align*}
    a ^{ i j } (x,t) &= \widetilde{a} ^{ i
                     j } ( {D} w ),\\
    b ^i (x,t) {D} _i z &= \bigl[ \widetilde{ a } ^{ i j } ( {D} w ) -
                        \widetilde{ a } ^{ i
                        j } ( {D} w ^\prime ) \bigr] {D} _i {D} _j w ^\prime +
                        \widetilde{ b } ( {D} w ) - \widetilde{ b } (
                        {D} w ^\prime ) . 
  \end{align*} 
  (In the last step, it is crucial that $ \widetilde{ a } ^{ i j } $
  and $ \widetilde{ b } ^i $ are independent of $w$, $ w ^\prime $.)
  Hence, $ \widetilde{ Q } w - \widetilde{ Q } w ^\prime = L z $, so
  we have
  \begin{equation*}
    z _t - L z \leq 0 
  \end{equation*} 
  in $ U _T $, and $ z \leq 0 $ on $ \Gamma _T $. By the parabolic
  weak maximum principle (cf.~\citet[Section 7.1, Theorem
  8]{Evans2010}), we conclude that $ z \leq 0 $ in $ U _T $, i.e.,
  $ w \leq w ^\prime $, which completes the proof.

  Finally, as in the previous section, the corresponding statement for
  $u$ and $ u ^\prime $ follows by the monotonicity of the exponential
  function.
\end{proof}

\begin{corollary}
  Let $ w \in C _1 ^2 ( U _T ) \cap C ( \overline{ U _T } ) $.
  \begin{enumerate}[label=(\roman*)]
  \item If $ w _t - \widetilde{ Q } w \leq 0 $, then
    $ \max _{ \overline{ U } _T } w = \max _{ \Gamma _T } w $.
  \item If $ w _t - \widetilde{ Q } w \geq 0 $, then
    $ \min _{ \overline{ U } _T } w = \min _{ \Gamma _T } w $.
  \end{enumerate} 
  The corresponding statements hold for subsolutions and
  supersolutions of the relativistic heat equation.
\end{corollary}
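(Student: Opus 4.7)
The plan is to derive both parts of the corollary as immediate specializations of the comparison principle in \autoref{thm:paraboliccomparison}, taking the comparison function to be a constant. The only calculation needed is the trivial observation that $\widetilde{Q}$ annihilates constants: if $w^\prime \equiv c$, then $w^\prime_t = 0$, $Dw^\prime = 0$, and $D^2 w^\prime = 0$, so from the definitions $Qw^\prime = 0$ and hence $\widetilde{Q} w^\prime = 0$.

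For part (i), I would set $M = \max_{\Gamma_T} w$ and introduce $w^\prime \equiv M$ on $\overline{U_T}$. Then $w^\prime_t - \widetilde{Q} w^\prime = 0$, so the hypothesis $w_t - \widetilde{Q} w \leq 0$ reads $w_t - \widetilde{Q} w \leq w^\prime_t - \widetilde{Q} w^\prime$ in $U_T$; moreover, $w \leq M = w^\prime$ on $\Gamma_T$ by the definition of $M$. \autoref{thm:paraboliccomparison} yields $w \leq M$ in $U_T$, whence $\max_{\overline{U}_T} w \leq M = \max_{\Gamma_T} w$. The reverse inequality is automatic from $\Gamma_T \subseteq \overline{U_T}$, giving equality.

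For part (ii), I would apply the same theorem with the roles interchanged: let $m = \min_{\Gamma_T} w$ and use the constant $m$ in place of ``$w$'' and the original $w$ in place of ``$w^\prime$'' in \autoref{thm:paraboliccomparison}. Since $m_t - \widetilde{Q} m = 0$ and $w_t - \widetilde{Q} w \geq 0$, the required inequality holds; the boundary condition $m \leq w$ on $\Gamma_T$ is immediate. The theorem then gives $m \leq w$ in $U_T$, i.e., $\min_{\overline{U}_T} w = \min_{\Gamma_T} w$.

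The final assertion about sub- and supersolutions $u$ of the relativistic heat equation follows either directly from the $u$-form of \autoref{thm:paraboliccomparison}, using the constants $e^M$ and $e^m$ as comparison functions, or, in the same spirit as the reductions in \autoref{sec:elliptic}, by applying parts (i) and (ii) to $w = \log u$ and invoking the monotonicity of the exponential. There is no genuine obstacle to overcome here: the corollary is engineered as an immediate consequence of \autoref{thm:paraboliccomparison}, and the only point meriting care is the bookkeeping of which variable plays the role of ``$w$'' versus ``$w^\prime$'' in the minimum case.
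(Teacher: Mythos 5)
Your proposal is correct and is exactly the argument the paper intends: the corollary is presented as an immediate consequence of \autoref{thm:paraboliccomparison} obtained by taking the comparison function to be a constant (which $\widetilde{Q}$ annihilates), just as you do. The paper omits the proof entirely, so your write-up simply makes explicit the routine specialization, including the correct role-swap of $w$ and $w^\prime$ in the minimum case.
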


We also mention another corollary of
\autoref{thm:paraboliccomparison}, which establishes monotonicity and
uniqueness properties for solutions to the relativistic heat equation

\begin{corollary}
  Suppose
  $ w , w ^\prime \in C _1 ^2 ( U _T ) \cap C ( \overline{ U _T } ) $
  are both solutions to the relativistic heat equation. If
  $ w \leq w ^\prime $ on $ \Gamma _T $, then $ w \leq w ^\prime $ in
  $ U _T $. Furthermore, if $ w \equiv w ^\prime $ on $ \Gamma _T $,
  then $ w \equiv w ^\prime $ in $ U _T $.
\end{corollary}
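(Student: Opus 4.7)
The plan is to derive this corollary directly from \autoref{thm:paraboliccomparison}, which has already established the comparison principle in the generality we need. Since both $w$ and $w^\prime$ are solutions, we have $w_t - \widetilde{Q} w = 0$ and $w_t^\prime - \widetilde{Q} w^\prime = 0$, so in particular the weak inequality $w_t - \widetilde{Q} w \leq w^\prime_t - \widetilde{Q} w^\prime$ holds trivially in $U_T$.

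For the first assertion, I would invoke \autoref{thm:paraboliccomparison} with the given boundary inequality $w \leq w^\prime$ on $\Gamma_T$ to conclude $w \leq w^\prime$ in $U_T$. For the second assertion, I would apply the first assertion twice, exchanging the roles of $w$ and $w^\prime$: since $w \equiv w^\prime$ on $\Gamma_T$ implies both $w \leq w^\prime$ and $w^\prime \leq w$ on $\Gamma_T$, one obtains both inequalities in $U_T$, hence $w \equiv w^\prime$ there. The statement for $u$ and $u^\prime$ then follows from the monotonicity of the exponential map, exactly as in the previous proofs in the paper.

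There is no real obstacle here; this corollary is essentially a bookkeeping consequence of the comparison theorem. The only subtlety worth flagging is that the second (uniqueness) half relies on the symmetry of the hypothesis under swapping $w$ and $w^\prime$, which the statement of \autoref{thm:paraboliccomparison} indeed affords since the inequality $w_t - \widetilde{Q} w \leq w^\prime_t - \widetilde{Q} w^\prime$ becomes equality when both are solutions.
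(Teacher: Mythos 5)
Your argument is correct and matches the paper's own proof: both derive the hypotheses of \autoref{thm:paraboliccomparison} trivially from the fact that $ w _t - \widetilde{Q} w = w ^\prime _t - \widetilde{Q} w ^\prime = 0 $, apply it once for the comparison statement, and apply it a second time with $w$ and $ w ^\prime $ interchanged for uniqueness. No gaps.
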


\begin{proof}
  Since $w$ and $ w ^\prime $ are solutions,
  $ w _t - \widetilde{ Q } w = w ^\prime _t - \widetilde{ Q } w
  ^\prime = 0 $.
  Hence, the first statement follows immediately by applying
  \autoref{thm:paraboliccomparison}, while the second statement
  follows by applying it again with $w$ and $ w ^\prime $
  interchanged.
\end{proof}

\subsection{Conjectured strong maximum/minimum and tangency principles
  in light cones}

We conclude with a brief discussion of possible directions in which
the results of this section might be strengthened in the future---but
which would also require the introduction of new techniques.

The strong maximum (minimum) principle for the classical heat equation
says that, if a subsolution (supersolution) $u$ attains an interior
maximum (minimum) at a point $ ( x, t ) \in U _T $, then $u$ must be
constant on $ U _t = U \times (0, t] $. As mentioned above, we cannot
hope for this to hold for the relativistic heat equation---at least
not for the cylinder $ U _t $. However, it seems likely that an
analogous statement might hold on the backwards light cone
\begin{equation*}
  U _{ ( x, t ) } = \bigl\{ ( \xi, \tau ) \in U _T : \lvert x - \xi
  \rvert \leq t - \tau \bigr\} ,
\end{equation*} 
which contains only points that can affect $ ( x, t ) $ without
violating relativity. (For $ c \rightarrow \infty $, this approaches
the cylinder $ U _t $.) The following conjecture states a version of
this principle restricted to this backwards light cone.

\begin{conjecture}
  If $ w \in C ^2 _1 ( U _T ) \cap C ( \overline{ U _T } ) $ satisfies
  $ w _t - \widetilde{ Q } w \leq 0 $ and attains an interior maximum
  $M$ at a point $ ( x, t ) \in U _T $, then $w \equiv M $ in the
  backwards light cone $ U _{ ( x, t ) } $. Likewise, if $w$
  satisfies $ w _t - \widetilde{ Q } w \geq 0 $ and attains an
  interior minimum $m$ at a point $ ( x, t ) \in U _T $, then
  $w \equiv m $ in $ U _{ ( x, t ) } $.
\end{conjecture}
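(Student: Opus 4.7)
The strategy is to adapt the Nirenberg-style proof of the strong parabolic maximum principle to the quasilinear, finite-speed setting, with the target propagation region being the backward light cone $ U _{ ( x, t ) } $ in place of the usual backward cylinder. By the reduction to $w$, it suffices to prove the subsolution case: assume $ w _t - \widetilde{ Q } w \leq 0 $ and $ w ( x, t ) = M $ is an interior maximum, then set $ E = \{ ( \xi , \tau ) \in U _{ ( x, t ) } : w ( \xi , \tau ) = M \} $, which is closed in the convex (hence connected) set $ U _{ ( x, t ) } $ and contains the apex $ ( x, t ) $. The goal is to show $ E = U _{ ( x, t ) } $, ideally by showing that $E$ is also relatively open in $ U _{ ( x, t ) } $ (or at least in its interior, with the boundary of the cone recovered by continuity of $w$).

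The easy half is a \emph{local} strong maximum principle at interior points of the cone. Since $ w \in C ^2 _1 ( U _T ) $, the gradient $ {D} w $ is locally bounded, so on any compact parabolic neighborhood the symbol $ \widetilde{a} ^{ i j } ( {D} w ) $ is bounded and uniformly positive definite. Freezing as in the proof of \autoref{thm:paraboliccomparison}, the operator $ w \mapsto w _t - \widetilde{ Q } w $ behaves locally like a linear, uniformly parabolic operator with bounded, continuous coefficients and no zeroth-order term, so the classical Nirenberg/Hopf argument propagates $ w \equiv M $ backward in time into a small cylinder $ B ( \xi _0 , \rho ) \times ( \tau _0 - \delta , \tau _0 ] $ about any $ ( \xi _0 , \tau _0 ) \in E $ with $ \tau _0 > 0 $.

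The main obstacle is bridging the gap between the \emph{parabolic} scaling of this local argument (roughly $ \delta \sim \rho ^2 $) and the \emph{hyperbolic} scaling $ \lvert \xi - x \rvert \leq t - \tau $ that defines the cone. A naive iteration of the local Hopf principle would in fact propagate $M$ to all of $ U \times ( 0, t ] $, contradicting the finite propagation speed of the relativistic heat equation established by \citet{AnCaMaMo2006}; some reconciliation is therefore needed. That reconciliation surely lies in the degeneracy of $ \widetilde{a} ^{ i j } ( p ) $ as $ \lvert p \rvert \to \infty $, which is expected to coincide with steepening gradients near the cone boundary where classical $ C ^2 _1 $ regularity may begin to fail. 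Turning this heuristic into a sharp quantitative statement that recovers exactly the $ c = 1 $ speed limit is, I expect, the core difficulty.

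A concrete route is to construct tailored barriers---for instance, traveling-wave or self-similar supersolutions of $ w _t - \widetilde{ Q } w $ whose level sets match the lightlike surface $ \lvert \xi - x \rvert = t - \tau $---and to deploy them via \autoref{thm:paraboliccomparison} to rule out $ w ( \xi _* , \tau _* ) < M $ at any prospective $ ( \xi _* , \tau _* ) \in U _{ ( x, t ) } $. Any successful construction is likely to draw on the entropy/entropy-flux techniques used in \citet{AnCaMaMo2006} and on the Wasserstein/optimal-transport structure underlying Brenier's derivation of the equation, both of which encode the $ c = 1 $ speed limit at a more intrinsic level than $ C ^2 $ PDE theory. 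Assembling this into a rigorous classical proof appears to be a genuinely nontrivial task, consistent with the authors' decision to state the result as a conjecture.
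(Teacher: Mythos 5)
This statement is left as a conjecture in the paper --- the authors supply no proof --- and your proposal does not supply one either. What you have written is a strategy discussion, and you concede as much in your final sentence. As a proof it has a genuine gap: the entire second half. The ``reconciliation'' between the parabolic scaling of the local Hopf--Nirenberg step and the hyperbolic scaling of the light cone is precisely the content of the conjecture, and you leave it at the level of a heuristic (``that reconciliation surely lies in the degeneracy of $\widetilde{a}^{ij}(p)$ as $\lvert p \rvert \to \infty$'') plus a barrier construction that is proposed but never carried out: no supersolution with lightlike level sets is exhibited, no comparison is actually performed, and no argument is given that $Dw$ must steepen near the cone boundary for a nonconstant subsolution. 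So the proposal should be read as a research plan consistent with the authors' decision to state the result as a conjecture, not as a proof of it.

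One further point deserves scrutiny, because it bears on whether the plan is aimed at the right target. Your first step --- freezing coefficients and applying the classical strong maximum principle locally --- uses only that $Dw$ is continuous, hence locally bounded; but that is automatic for $w \in C^2_1(U_T)$, so if this step is valid at one interior point of $E$ it is valid at every point of $U_T$, and iterating it propagates $w \equiv M$ to the entire backward cylinder $U \times (0,t]$ (for connected $U$), not merely to the cone. You describe this as ``contradicting'' the finite propagation speed of \citet{AnCaMaMo2006}, but there is no contradiction: those results concern entropy solutions that vanish outside a compact set, and such solutions are excluded from the classical framework here, since $w = \log u$ requires $u > 0$ and the degeneracy of $\widetilde{a}^{ij}$ is only activated as $\lvert Dw \rvert \to \infty$, i.e., at the edge of (or outside) the $C^2_1$ class. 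In other words, in the solution class in which the conjecture is posed, the correct propagation set may well be the full backward cylinder rather than the light cone, whereas in the solution class where the light cone is sharp, the $w$-formulation and your local Hopf step are both unavailable. Deciding which regime the statement really lives in is a prerequisite to any proof, and neither your proposal nor the paper resolves it.
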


We also expect that an even stronger tangency principle might hold on
the backwards light cone, as follows.

\begin{conjecture}
  Suppose
  $ w , w ^\prime \in C ^2 _1 ( U _T ) \cap C ( \overline{ U _T } ) $
  satisfy
  \begin{equation*}
    w _t - \widetilde{ Q } w \leq w ^\prime _t - \widetilde{ Q } w
    ^\prime 
  \end{equation*}
  and $ w \leq w ^\prime $ in the backwards light cone
  $ U _{ (x,t) } $ of a point $ ( x, t ) \in U _T $. If
  $ w ( x, t ) = w ^\prime ( x, t ) $, then $ w \equiv w ^\prime $ in
  $ U _{ (x,t) } $.
\end{conjecture}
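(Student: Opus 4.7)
The plan is to adapt the linearization used in the proof of \autoref{thm:paraboliccomparison} and then seek a strong minimum principle for the resulting linear parabolic operator that respects the light-cone geometry. Setting $z = w ^\prime - w$ and subtracting the two differential inequalities, the Taylor-expansion trick used there converts the nonlinear difference $\widetilde{Q} w ^\prime - \widetilde{Q} w$ into a linear combination of second and first derivatives of $z$, yielding $z _t - L z \geq 0$, where $L z = a ^{ i j } ( x , t ) {D} _i {D} _j z + b ^i ( x , t ) {D} _i z$ with $a ^{ i j } = \widetilde{ a } ^{ i j } ( {D} w ^\prime )$ and $b ^i$ absorbing both the principal-part difference $\bigl[ \widetilde{ a } ^{ i j } ( {D} w ^\prime ) - \widetilde{ a } ^{ i j } ( {D} w ) \bigr] {D} _i {D} _j w$ (which is linear in $ {D} z $ via the mean value theorem) and the analogous expression for $ \widetilde{ b }$. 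By hypothesis $z \geq 0$ throughout $ U _{ ( x , t ) } $ and $z ( x , t ) = 0$, so $ ( x , t ) $ is an interior zero of the nonnegative supersolution $z$, and the conjecture reduces to showing $z \equiv 0$ on all of $ U _{ ( x , t ) } $.

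The obstruction to finishing with an off-the-shelf result is sharp. If one could apply Nirenberg's strong minimum principle for uniformly parabolic operators, one would conclude that $z$ vanishes on the entire past-connected component of $( x , t )$ in $ U _T $, which for connected $U$ is the full cylinder $U \times ( 0 , t ]$ — strictly larger than the backwards light cone. Since the relativistic heat equation has finite propagation speed (cf.~\citet{AnCaMaMo2006}), such a conclusion cannot hold in general, so the geometric restriction to $ U _{ ( x , t ) } $ must \emph{emerge} from the proof rather than be imposed afterward. The mechanism producing the correct speed is the degeneracy of $L$: the eigenvalues of $ \widetilde{ a } ^{ i j } ( p ) $ are $ ( 1 + \lvert p \rvert ^2 ) ^{ - 3/2 } $ and $ ( 1 + \lvert p \rvert ^2 ) ^{ - 1/2 } $, both of which vanish as $ \lvert p \rvert \to \infty $, and the drift $b ^i$ — which depends on $D^2 w$ — is not a priori bounded, so Nirenberg's hypotheses genuinely fail.

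The main task, and the main obstacle, is therefore to prove a cone-restricted strong minimum principle for linear parabolic operators with this specific degeneracy-plus-drift structure. One natural route is a direct barrier construction tailored to the light-cone geometry: for each small $ \delta > 0 $, build a strict subsolution of $L$ that is strictly negative on the truncated backwards cone with vertex $ ( x , t ) $ and base at time $ t - \delta $, and is nonpositive on its sloped lateral boundary; adding a small multiple of such a barrier to $z$ would then contradict the minimum at $( x , t )$ unless $z$ vanishes identically in a neighborhood, and iteration down the cone would complete the argument. A second route is to mimic the entropy and $L ^1$-contraction techniques of \citet{AnCaMaMo2006} that establish finite propagation speed for $ \widetilde{ Q } $ itself, and adapt them to produce domain-of-dependence estimates for the difference $z$. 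In either approach the genuinely hard step is the quantitative control of the drift $b ^i$: its dependence on $D^2 w$ makes it potentially unbounded, and it is precisely this unboundedness that both forces propagation to be finite and prevents standard maximum-principle machinery from applying — which is why the statement was posed as a conjecture rather than a theorem.
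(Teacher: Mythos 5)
This statement is posed in the paper as a \emph{conjecture}: the authors offer no proof, and explicitly flag that establishing it ``would also require the introduction of new techniques.'' Your submission is likewise not a proof but a program, and you are candid about that, so there is no pretense to check against a ground truth. What can be assessed is whether your reduction and your diagnosis of the obstruction are sound. The reduction is: setting $z = w' - w$ and linearizing exactly as in the proof of \autoref{thm:paraboliccomparison} does give $z_t - Lz \geq 0$ with $z \geq 0$ on the cone and $z(x,t) = 0$, and your eigenvalue computation for $\widetilde{a}^{ij}(p)$ is correct.

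Your diagnosis of why standard machinery fails is, however, partly misplaced, and this matters for the barrier construction you propose. For $w, w' \in C^2_1(U_T)$ the coefficients $a^{ij} = \widetilde{a}^{ij}(Dw')$ and $b^i$ are built from \emph{continuous} functions of $(x,t)$, so on any compact subset of $U_T$ the operator $L$ is uniformly parabolic with bounded coefficients; local degeneracy and unbounded drift are not what blocks Nirenberg's theorem here. (The finite-propagation-speed results of \citet{AnCaMaMo2006} concern entropy solutions whose support has a boundary where $u = 0$, i.e., where $w = \log u = -\infty$ and the present classical framework does not apply, so they do not directly forbid a full-cylinder conclusion for positive classical solutions.) The genuine obstruction is geometric: the vertex $(x,t)$ is a \emph{boundary} point of the cone $U_{(x,t)}$, and the hypothesis $z \geq 0$ is assumed only inside the cone, so $(x,t)$ need not be a local minimum of $z$ in $U_T$ and the interior strong minimum principle cannot be invoked at all. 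What is needed is a Hopf-type boundary-point argument at a characteristic cone vertex, or equivalently the cone-adapted barrier you sketch in your second paragraph --- but that barrier must be designed to handle the vertex geometry, not the (locally absent) degeneracy. With that correction your proposal points in a reasonable direction, but it remains, like the paper's statement, a conjecture rather than a theorem.
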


\section*{Acknowledgments}

This work was supported in part through the ARTU program at Washington
University in St.~Louis, funded by NSF CAREER Award DMS-1055897, for
which we wish to express our thanks to \'Alvaro Pelayo. Additional
support was provided by a grant from the Simons Foundation (\#279968
to Ari Stern).

\

\end{document}